\documentclass[a4paper]{article}
\usepackage{amsmath, amsfonts, amsthm, amssymb}
\usepackage{color}
\usepackage{tikz}
\usetikzlibrary{shapes}

\makeatletter
\let\@fnsymbol\@arabic
\makeatother

\newcommand\thankx[1]{\begingroup\let\rlap\relax\thanks{#1}\endgroup}

\newtheorem{prop}{Proposition}
\newtheorem{lemma}[prop]{Lemma}
\newtheorem{theorem}[prop]{Theorem}

\newtheorem{conjecture}[prop]{Conjecture}

\begin{document}

\title{The Robber Locating game}
\author{John Haslegrave\thankx{University of Sheffield, Sheffield, UK. {\tt j.haslegrave@cantab.net}}, Richard A. B. Johnson\thankx{University of Memphis, Memphis TN, USA. {\tt rjhnsn25@memphis.edu}}, Sebastian Koch\thankx{University of Cambridge, Cambridge, UK. {\tt sk629@cam.ac.uk}}}
\maketitle

\begin{abstract}
We consider a game in which a cop searches for a moving robber on a graph using distance probes, studied by Carragher, Choi, Delcourt, Erickson and West, which is a slight variation on one introduced by Seager. Carragher, Choi, Delcourt, Erickson and West show that for any fixed graph $G$ there is a winning strategy for the cop on the graph $G^{1/m}$, obtained by replacing each edge of $G$ by a path of length $m$, if $m$ is sufficiently large. They conjecture that the cop does not have a winning strategy on $K_n^{1/m}$ if $m<n$; we show that in fact the cop wins if and only if $m\geqslant n/2$, for all but a few small values of $n$. They also show that the robber can avoid capture on any graph of girth 3, 4 or 5, and ask whether there is any graph of girth 6 on which the cop wins. We show that there is, but that no such graph can be bipartite; in the process we give a counterexample for their conjecture that the set of graphs on which the cop wins is closed under the operation of subdividing edges. We also give a complete answer to the question of when the cop has a winning strategy on $K_{a,b}^{1/m}$.
\end{abstract}

\section{Introduction}

Pursuit and evasion games on graphs have been widely studied. Perhaps the most significant variant is the Cops and Robbers game, an instance of which is a graph $G$ together with a fixed number of cops. The cops take up positions on vertices of $G$ and a robber then starts on any unoccupied vertex. The cops and the robber take turns: the robber chooses either to remain at his current vertex or to move to any adjacent vertex, and then the cops simultaneously make moves of the same form. The game is played with perfect information, so that at any time each of the players knows the location of all others. The cops win if at any point one of them is at the same location as the robber. The cop number of a graph is the minimum number of cops required for the cops to have a winning strategy. Early results on this game include those obtained by Nowakowski and Winkler \cite{NW83}, who categorise the graphs of cop number 1, and Aigner and Fromme \cite{AF84}, who show that every planar graph has cop number at most 3. An important open problem is Meyniel's conjecture, published by Frankl \cite{Fra87}, that the cop number of any $n$-vertex connected graph is at most $O(\sqrt{n})$ -- this has been shown to be true up to a $\log(n)$ factor for random graphs by Bollob{\'a}s, Kun and Leader \cite{BLK13}, following which \L uczak and Pra\l at improved the error term \cite{LP10}. More recently, several variations on the game have been analysed by Clarke and Nowakowski (e.g. \cite{CN00}).

In this paper we consider the Robber Locating game, introduced in a slightly different form by Seager \cite{Sea12}, and further studied by Carragher, Choi, Delcourt, Erickson and West \cite{CCDEW}, in which a cop probes a vertex at each turn and is told the current distance to the robber. For ease of reading we shall refer to the cop as female and the robber as male. In this setting the cop is not on the graph herself, and can probe vertices without restriction; she wins if at any point she is able to determine the robber's current location. Clearly the cop can win eventually with probability 1 on a finite graph against a robber who has no knowledge of her future moves, simply by probing random vertices until she hits the current location of the robber. This naturally leads to a different emphasis: we consider the question of whether the cop has a strategy which is guaranteed to win in bounded time, or equivalently whether she can catch an omniscient robber. We say that a graph is \textit{locatable} if such a strategy exists. A similar game phrased in terms of a cat and mouse, in which the cat wins only if it probes the current location of the mouse, and receives no information otherwise, but the mouse must move at each turn, was recently analysed by one of the authors \cite{Has13}.

In the Robber Locating game each round consists of a move for the robber, in which he either moves to an adjacent vertex or stays where he is, followed by a probe of a particular vertex by the cop. The cop then receives a response giving the current distance of the robber from the vertex probed. She wins if she is then able to identify the robber's location. In the game as introduced by Seager there was an additional rule that the robber cannot move to the vertex probed in the previous round (the \textit{no-backtrack condition}). Carragher, Choi, Delcourt, Erickson and West consider the game without this restriction, as do we.

The authors of \cite{CCDEW} write $G^{1/m}$ for the graph obtained by replacing each edge of $G$ by a path of length $m$ through new vertices. Each such path is called a \textit{thread}, and an \textit{original vertex} in $G^{1/m}$ is a vertex which corresponds to a vertex of $G$. The main result of \cite{CCDEW} is that $G^{1/m}$ is locatable provided $m\geqslant \min\{n(G),1+\max\{\mu(G)+2^{\mu(G)},\Delta(G)\}\}$, where $\mu(G)$ is the metric dimension of $G$. The notion of metric dimension was introduced independently by Slater \cite{Sla75}, and by Harary and Melter \cite{HM76}. The metric dimension of $G$ is the size of the smallest set $S$ of vertices such that for every $x,y\in V(G)$ with $x\neq y$ there is some $z\in S$ with $d(x,z)\neq d(y,z)$. 

The authors of \cite{CCDEW} give better bounds on $m$ for complete bipartite graphs, and in this case we will find the critical value of $m$ exactly. They also conjecture that their bound is tight for complete graphs, i.e. that $K^{1/m}_n$ is locatable if and only if $m\geqslant n$. We show that in fact, except for a few small values of $n$, the actual threshold is $n/2$. They also prove that no graph of girth 3, 4 or 5 is locatable. The cycle $C_6$ is not locatable, and so they ask whether there is a locatable graph of girth 6. We give an example of such a graph, but show that no bipartite graph of girth 6 is locatable. In the process we give a counterexample to their conjecture that if $G$ is locatable then so is any graph obtained by subdividing a single edge of $G$. 

\section{Graphs of girth 6}
\label{Girth6Section}

In this section we first give an example of a locatable graph of girth 6, together with an explicit strategy for the cop. Define $H$ to be the graph obtained from the cycle $v_1v_2\cdots v_{11}$ by adding the edge $v_3v_9$. $H$ consists of a 6-cycle and a 7-cycle with an edge in common. We include an illustration of $H$ in Figure \ref{robbergirth6graph}.

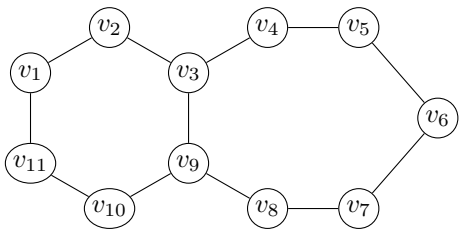
\begin{figure}[ht!] \centering
  \begin{tikzpicture}[scale=0.6]
    \tikzstyle{vertex}=[draw,shape=ellipse,minimum size=15pt,inner sep=0pt, fill=white]
		\tikzstyle{smallvertex}=[draw,shape=ellipse,minimum size=5pt,inner sep=0pt, fill=white]
	\foreach \x/\y/\name/\alabel in {-3.464/1/$v_1$/V1, -1.732/2/$v_2$/V2, 0/1/$v_3$/V3, 1.732/2/$v_4$/V4, 3.732/2/$v_5$/V5, 5.464/0/$v_6$/V6, 3.732/-2/$v_7$/V7, 1.732/-2/$v_8$/V8, 0/-1/$v_9$/V9, -1.732/-2/$v_{10}$/V10, -3.464/-1/$v_{11}$/V11}
	{\node[vertex] (\alabel) at (\x, \y) {\name};}
	\foreach \start/\end in {V1/V2, V2/V3, V3/V4, V4/V5, V5/V6, V6/V7, V7/V8, V8/V9, V9/V10, V10/V11, V11/V1, V3/V9}
		{\draw (\start) -- (\end);}
  \end{tikzpicture}
	\caption{A cycle of length 6 and one of length 7 sharing an edge.}
  \label{robbergirth6graph}
\end{figure}

\begin{theorem}\label{c78}The graph $H$ as defined above is locatable.
\end{theorem}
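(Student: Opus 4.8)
The plan is to exhibit an explicit strategy for the cop and verify that it wins in a bounded number of rounds. It is convenient to track the \emph{candidate set} $C$: the set of vertices the robber could currently occupy. A probe of a vertex $w$ returning distance $d$ replaces $C$ by $C\cap\{x:d(w,x)=d\}$, and a move of the robber replaces $C$ by the set $N[C]$ of vertices at distance at most $1$ from $C$. Initially $C=V(H)$, and the cop has won as soon as $|C|=1$.

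Two features of $H$ organise the argument. First, the only nontrivial automorphism is the reflection $\rho\colon v_i\mapsto v_{12-i}$ (indices mod $11$), which fixes $v_6$, swaps the chord-ends $v_3,v_9$, and reverses each of the two arcs joining them; it is the unique symmetry because those arcs have different lengths ($6$ and $5$). Second, $\{v_1,v_6\}$ is a resolving set of $H$, so any two vertices are separated by a probe at $v_1$ or at $v_6$ --- but because the robber moves between probes, this does not immediately locate him and several probes are needed. The opening move is to probe $v_6$, which lies on the axis of $\rho$: a response of $0$ wins, and otherwise $C$ is the $\rho$-invariant pair $P_d=\{v_{6-d},v_{6+d}\}$. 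After the robber's move $C=N[P_d]$ is still $\rho$-invariant, and the next probe is taken \emph{off} the axis --- at $v_1$ when $d\geq2$, and at the arc-end $v_4$ when $d=1$ --- which, since $\rho$ is the only symmetry of $H$, strictly refines $C$; in several branches this already locates the robber, and in the rest it leaves a pair $\{x,y\}$ that is no longer $\rho$-invariant.

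From that point the strategy is recursive. Whenever $C$ is a pair $\{x,y\}$, the robber's next move makes it $N[\{x,y\}]$, and I would probe at a vertex $w$ from which $N[\{x,y\}]$ is the vertex set of a geodesic path, so that the distances $d(w,\cdot)$ on $N[\{x,y\}]$ are all distinct and the probe wins outright --- this works, for instance, with $\{v_7,v_8\}$ probed from $v_9$, with $\{v_6,v_7\}$ from $v_5$, and with $\{v_{10},v_{11}\}$ from $v_8$. When $N[\{x,y\}]$ is not of this form, I would instead probe to peel off the vertices near one end and drive the residual uncertainty along one of the two long induced paths $v_3v_4\cdots v_9$ and $v_9v_{10}v_{11}v_1v_2v_3$ until the enlarged set becomes a short geodesic path and the previous case applies. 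The chord $v_3v_9$ is exactly what makes this possible: it breaks the $\rho$-symmetry so the cop knows which way to drive, and it makes short segments near $v_3$ or $v_9$ into geodesic paths as seen from the opposite chord-end.

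The main obstacle is proving the strategy terminates rather than cycling --- the danger being a candidate set that shrinks to a pair, is re-enlarged by the robber's move, shrinks to a pair again, and so on forever. Since the space of candidate sets is finite, it is enough to show that the probes chosen never revisit a pair: I would track a suitable potential (for example, a measure of where the residual pair sits along the induced paths) and check that each step of the strategy strictly decreases it, accepting a temporary increase in $|C|$ only in the branches near $v_6$, where the genuine symmetry $\rho$ forces an uninformative on-axis probe to be replaced by an off-axis one, and verifying that this is recouped within one or two rounds. Completing the finite case check over all reachable pairs then shows the cop wins within a bounded number of rounds, so $H$ is locatable.
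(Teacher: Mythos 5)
There is a genuine gap: what you have written is a plan for a proof rather than a proof. The entire content of this theorem is the explicit finite verification that a cop strategy exists, and at the decisive moments you defer exactly that work: ``I would instead probe to peel off the vertices near one end and drive the residual uncertainty along one of the two long induced paths,'' ``I would track a suitable potential \dots and check that each step of the strategy strictly decreases it,'' and ``completing the finite case check over all reachable pairs then shows the cop wins.'' None of these checks is carried out, and they are not routine: the danger of cycling that you correctly identify is the crux, and it is concentrated precisely in the pairs your geodesic trick does not cover --- pairs such as $\{v_3,v_9\}$ (the chord), $\{v_3,v_4\}$, $\{v_4,v_8\}$, $\{v_4,v_7\}$, whose closed neighbourhoods are not resolved by any single probe, and which re-arise after the robber's move. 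For these the paper sets up an explicit ordered list of ten ``the robber is known to be in this pair'' situations, each with a named probe that either wins outright or reduces to a strictly earlier situation in the list; termination is then immediate because the reductions follow a well-founded order, with no potential function needed. Your proposal never exhibits such a list (or any concrete substitute), so the claim that the driving procedure terminates without revisiting a pair is unsubstantiated.

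Some of the individual assertions also need verification rather than appeal to symmetry: for instance, that an off-axis probe ``strictly refines $C$'' because $\rho$ is the only automorphism is not by itself enough --- one must check, distance by distance, that every possible answer either wins or leaves a pair that the later machinery can handle (your worked examples $\{v_7,v_8\}$ from $v_9$, $\{v_6,v_7\}$ from $v_5$, $\{v_{10},v_{11}\}$ from $v_8$ are correct, but they are the easy cases). The fix is essentially to do what the paper does: enumerate the reachable two-vertex uncertainty sets, specify a probe for each, and verify by direct distance computation that each probe wins or reduces to an already-treated case, together with an opening sequence (probe $v_6$, then handle the answer-$4$ branch with a further probe) that lands in one of the treated cases. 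Until that enumeration is completed, the proposal does not establish that $H$ is locatable.
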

\begin{proof}
We first give several situations from which the cop can either win or reduce to an earlier situation, and then show how she can reach a winning situation.
\begin{enumerate}
\item If the robber is known to be at $v_2$ or $v_4$ then the cop wins by probing $v_1$. 
\item If the robber is known to be at $v_3$ or $v_4$ then the cop probes $v_9$, winning or reducing to (i).
\item If the robber is known to be at $v_3$ or $v_8$ then the cop probes $v_7$, winning or reducing to (ii).
\item If the robber is known to be at $v_3$ or $v_9$ then the cop probes $v_{10}$, winning or reducing to (ii) or (iii).
\item If the robber is known to be at $v_4$ or $v_5$ then the cop wins by probing $v_6$.
\item If the robber is known to be at $v_5$ or $v_7$ then the cop probes $v_8$, winning or reducing to (v).
\item If the robber is known to be at $v_6$ or $v_8$ then the cop probes $v_6$, winning or reducing to (vi).
\item If the robber is known to be at $v_4$ or $v_7$ then the cop probes $v_7$, winning or reducing to (vii) or (ii).
\item If the robber is known to be at $v_4$ or $v_8$ then the cop probes $v_9$, winning or reducing to (iii) or (viii).
\item If the robber is known to be at $v_1$ or $v_{11}$ then the cop wins by probing $v_2$.
\end{enumerate}
The cop starts by probing $v_6$. If the answer is 0 she has won, and if it is 1, 2, 3, or 5 she has reduced to (vi), (ix), (iv), or (x) respectively. Otherwise the answer must be 4, in which case she probes $v_2$. This locates him unless the answer is 1 (when the robber must be at $v_1$ or $v_3$) or 2 (when he must be at $v_{11}$ or $v_9$). These two cases are equivalent by the symmetry of $H$, so assume the former. Now the cop probes $v_6$. If the answer is 4 the robber must be at $v_2$. If not the cop has reduced to (iv) or (x). 
\end{proof}

We have shown that there is a locatable graph of girth 6, answering a question of \cite{CCDEW}. Next we show that a significant class of graphs of girth 6 are non-locatable.

\begin{theorem}Any bipartite graph of girth 6 is non-locatable.
\end{theorem}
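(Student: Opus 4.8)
The plan is to exhibit a strategy by which the robber stays permanently un-located. I will use the standard reformulation of the game: against a cop playing probes $z_1,z_2,\dots$, set $S_0=V(G)$ and $S_t=N[S_{t-1}]\cap\{v: d(z_t,v)=d_t\}$, where at round $t$ the robber may pick the answer $d_t$ subject only to $S_t\neq\emptyset$ (this models the omniscient robber). If the robber can always keep $|S_t|\geq 2$, then $G$ is non-locatable: for any horizon $T$ one recovers a genuine robber trajectory by backtracking, choosing $r_T\in S_T$ and then $r_{t-1}\in S_{t-1}\cap N[r_t]$ (nonempty since $S_t\subseteq N[S_{t-1}]$), and along it the cop's knowledge is always the set $S_t\ni r_t$, which has size at least $2$.

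Since $G$ has girth $6$ it contains a $6$-cycle $C=w_1w_2\cdots w_6$, and a short case check shows $C$ is both induced and isometrically embedded in $G$: a chord would create a $3$- or $4$-cycle, and a length-$2$ shortcut between two vertices at $C$-distance $3$ would create a $5$-cycle. The robber will confine himself to $C$. The one place bipartiteness enters is the observation that for every $z\in V(G)$ the sequence $\bigl(d(z,w_1),\dots,d(z,w_6)\bigr)$ is a closed walk in $\mathbb{Z}$ all of whose steps are $\pm1$: consecutive $w_i$ are adjacent, so their distances to $z$ differ by at most $1$, and bipartiteness forces the difference to be exactly $1$.

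I would then maintain the invariant ``$S_t\cap V(C)$ contains two vertices that are non-adjacent in $C$'', which holds at $t=0$ because $S_0\cap V(C)=V(C)$. For the inductive step, write $T=S_{t-1}\cap V(C)$ and let $\{w_a,w_c\}\subseteq T$ be a non-adjacent pair. Since $C$ is induced, $N[S_{t-1}]\cap V(C)\supseteq N_C[T]\supseteq N_C[\{w_a,w_c\}]$, which is all of $C$ when $w_a,w_c$ are antipodal and otherwise an arc of five consecutive vertices. When the cop probes $z_t$, restrict the above closed $\pm1$ walk to this set. If it is all of $C$: a length-$6$ $\pm1$ walk cannot be injective, since an injective $\pm1$ walk is monotone and then its first and last terms differ by $5$, not $1$; so some distance value is repeated, necessarily at a non-adjacent pair (adjacent vertices have distances of opposite parity). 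If it is an arc of five vertices: the $\pm1$ walk along those five vertices cannot be monotone, since a monotone $\pm1$ walk of length $4$ cannot be completed to a closed walk of length $6$; hence it has a strict local extremum, and the two neighbours of that extremum lie at equal distance from $z_t$ and are non-adjacent in $C$. In either case the robber answers with that repeated distance, so $S_t\cap V(C)$ again contains a non-adjacent pair, the invariant is restored, and $|S_t|\geq2$.

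The main obstacle is engineering an invariant robust enough to survive an arbitrary probe: simply pinning two candidate vertices fails, because the cop can probe a vertex whose distances to the relevant local neighbourhood are pairwise distinct, and one instead needs a ``folding'' of the graph. An induced isometric $6$-cycle provides exactly such a fold, and the combinatorial core is the elementary fact that a closed $\pm1$ walk of length $6$ — and even a length-$4$ sub-walk of one — cannot be repeat-free. (Routine loose ends: we may work in the component of $G$ containing $C$, and if $G$ is disconnected the answer $\infty$ to a probe in another component is only helpful to the robber.) Exactly the same scheme applied to an induced isometric $4$-cycle recovers the known statement for bipartite graphs of girth $4$; it is the even girth, i.e. bipartiteness, that supplies the unit-step structure, which is precisely why the non-bipartite graph $H$ of Theorem~\ref{c78} is not reached by this argument.
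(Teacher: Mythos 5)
Your proposal is correct and follows essentially the same strategy as the paper: the robber confines himself to a $6$-cycle $C$, maintains the invariant that two non-adjacent vertices of $C$ remain possible locations, and uses bipartiteness to conclude that two candidates at equal distance from the probed vertex cannot be adjacent. The only (minor) difference is how the repeated distance is produced -- you use the local-extremum/non-injectivity property of a closed $\pm 1$ walk around $C$, while the paper applies pigeonhole to the at least $5$ reachable vertices of $C$, whose distances to the probe all lie among $4$ consecutive values; the extra machinery you introduce (the explicit consistent-set recursion and the isometry of $C$) is sound but not needed for the argument.
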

\begin{proof}Let $G$ be a bipartite graph of girth 6 and let $C$ be a 6-cycle of $G$. We show that the robber can win even if he is restricted to $V(C)$, by proving that if there are two non-adjacent possible robber locations in $V(C)$ after the $t^\text{th}$ probe, then no matter what vertex the cop probes next, some answer will leave two non-adjacent possible robber locations.

Suppose the robber may be at either of two non-adjacent vertices in $V(C)$ after the cop's $t^\text{th}$ probe. There are at least 5 vertices in $V(C)$ which the robber may have reached before the $(t+1)^\text{st}$ probe. Suppose the cop's $(t+1)^\text{st}$ probe is at some vertex $v$, and consider the distances from $v$ to these 5 vertices. Writing $d$ for the minimum of these distances, each one must be either $d$, $d+1$, $d+2$ or $d+3$. Since there are 5 vertices, some two must be at the same distance from $v$, so if that distance is returned there are two vertices in $V(C)$ which are possible robber locations after the $(t+1)^\text{st}$ probe. Since these two vertices are at the same distance from $v$, and $G$ is bipartite, they cannot be adjacent.
\end{proof}

Write $H'$ for the graph obtained by subdividing the edge $v_5v_6$ of $H$. Since $H'$ is bipartite (it consists of a 6-cycle and an 8-cycle with an edge in common), the robber wins on $H'$, but the cop wins on $H$ by Theorem \ref{c78}. Consequently these two graphs give a counterexample to the conjecture of \cite{CCDEW} that subdividing an edge of any cop-win graph gives another cop-win graph.

\section{Subdivisions of complete graphs}
\label{CompleteSection}

In this section we consider graphs of the form $K^{1/m}_n$. We show that if $m < n/2$ the robber wins and if $m \geqslant n/2$ for $n \geqslant 14$ the graph is locatable. For the remaining cases when $n$ is small we note the few exceptional cases that do not follow this behaviour. Consequently for each $n$ we shall have determined the winning player in all cases.

If $x$ and $y$ are original vertices of $G^{1/m}$ which correspond to adjacent vertices of $G$ we will write $x\cdots y$ for the thread of length $m$ between them. We use ``a vertex on $x\cdots y$'' to mean any of the $m+1$ vertices of the thread, but ``a vertex inside $x\cdots y$'' will exclude $x$ and $y$. When $m$ is even we will use the term ``midpoint'' for the central vertex of a thread, and when $m$ is odd we will use the term ``near-midpoint'' for either vertex of the central edge of a thread. 

We will present the proofs separately for the robber and cop winning conditions. We begin with the proofs that the robber wins for $m \leqslant (n-1)/2$, which will rely on him being able to move between original vertices without being located by the cop. 

\begin{theorem}\label{CARmsmallrobberwin}Let $m < n/2$. Then the robber wins on the graph $K_n^{1/m}$.
\end{theorem}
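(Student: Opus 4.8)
The plan is to give the robber an explicit perpetual‑evasion strategy in which he travels along threads from one original vertex to the next, and to maintain as an invariant that after every probe there are at least two vertices consistent with all the answers received so far; this shows the cop can never locate him.

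The backbone of the analysis is the following observation: while the robber is strictly inside a thread $x\cdots y$, say at distance $j$ from $x$ and $m-j$ from $y$, a probe at any original vertex other than $x$ and $y$ returns $m+\min(j,m-j)$, a quantity depending only on how deep the robber is in his thread and not at all on which thread he occupies or which endpoint is nearer. Hence the only probes that give the cop information about which thread the robber is on are those at an endpoint of, or a vertex inside, his current thread. I would track the cop's knowledge by a ``type'': essentially a reservoir of candidate threads sharing a common original vertex together with the robber's common depth along them, and, at the moments the robber sits on an original vertex, simply a set of candidate original vertices. The crucial counting point is that traversing one thread takes $m$ rounds, and during a traversal the cop can eliminate at most one candidate thread per round; on the other hand every original vertex is an endpoint of $n-1$ threads, so reaching an original vertex refreshes the reservoir. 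Because the robber must, when choosing where to go next, keep a margin for the cop's probing during both the current traversal and the following one, the hypothesis enters as $n>2m$, which is exactly what keeps the number of candidates at least $n-2m+1\geqslant 2$ throughout.

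The step I expect to be the genuine obstacle is making this invariant robust against the cop's sharpest probes and against the changes of type at the original vertices. A bare ``which thread'' ambiguity is destroyed by a probe at an endpoint of the robber's current thread, so the robber must simultaneously keep his depth within the thread ambiguous — for instance by leaving the cop unsure which endpoint is the nearer, or whether he is approaching or retreating from a given endpoint — and arranging a candidate set carrying this extra symmetry that is still closed under one round of play is the heart of the matter. Likewise, when the robber ``turns a corner'' at an original vertex the shape of the consistent set changes, and one must check that its size does not collapse there; in particular the robber must never allow the consistent set to become a family of positions that all reach a common original vertex at the same round, since a probe at that vertex would then pinpoint him. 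Carrying this out amounts to a finite but intricate case analysis — listing the possible types and, for each, checking every probe the cop might make — and this bookkeeping, rather than a single decisive step, is where the work lies.
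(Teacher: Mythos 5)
Your plan has the right flavour (travel along threads between original vertices, keep a counted reservoir of candidate threads, use $n>2m$ as the margin), but there is a genuine gap, and it sits exactly where you predicted plus one place you did not. First, the claim that ``the only probes that give the cop information about which thread the robber is on are those at an endpoint of, or a vertex inside, his current thread'', and the consequent rate of ``at most one candidate thread per round'', are false. If the robber is at depth $j$ inside a thread leaving $a=v_1$ and the cop probes a vertex $z$ inside some other thread $v_i\cdots v_{j'}$, the answer is $\min\{d(z,v_i),d(z,v_{j'})\}+m+\min\{j,m-j\}$ when neither $v_i$ nor $v_{j'}$ is the robber's destination, but roughly $d(z,v_i)+m-j$ when $v_i$ is; so a probe at (or near) the midpoint of a thread joining two candidate destinations eliminates \emph{both} of them in one round. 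The true rate is two per round, and with that rate your margin computation collapses: crudely, $n-1$ candidates minus $2(m-1)$ leaves only $n-2m+1\geqslant 2$ destinations, and two (or even three) surviving candidate original vertices are \emph{not} enough, because a probe strictly inside the thread between two candidates can return three distinct distances for three candidates and locate the robber. The paper's proof rescues the count by two finer observations you would need: the cop's first informative probe eliminates only one thread, and any probe that checks whether the robber has turned around (which must be on a thread at $a$) eliminates only one thread that round; together these guarantee at least \emph{four} original vertices he could occupy on arrival, which is exactly what forces any next probe to leave two equidistant candidates and restores the invariant.

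Second, your invariant is weaker than what is actually maintained, and you explicitly leave its closure under one round of play (``the heart of the matter'') as an unexecuted case analysis. The paper does not work with a bare ``two consistent vertices'' invariant; it maintains the sharper state ``two candidate \emph{original} vertices'', exploits that any probe equidistant from the pair is information-free, and upon a non-equidistant probe commits the robber to a one-way traversal away from a known endpoint, so that only the destination is ambiguous and the counting above applies. Since your proposal both relies on the incorrect one-per-round elimination rate and defers the decisive bookkeeping, it does not yet constitute a proof; repairing it essentially requires the two-per-round accounting with the first-probe and turn-around corrections, i.e.\ the argument of the paper.
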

\begin{proof}We prove this by giving an explicit strategy for the robber that achieves the following. Assuming at some time he could be in a set of two original vertices, then we claim he can either remain in this pair of original vertices, or reach another pair without being located, and hence he can evade capture indefinitely. We will denote the set of original vertices $\{v_1, \ldots, v_n\}$.

Let us first assume that following a probe by the cop (which we will refer to as the $0^\text{th}$ probe) the robber reveals that he could be in the pair of original vertices $\{v_1, v_2\}$, but that the cop does not know which of them he is in. After this he can move to anywhere in $(v_1 \cup v_2 \cup N(v_1) \cup N(v_2))$. Firstly we will separately consider the result of the cop's first probe, which can be in one of two places. 

\begin{enumerate}
\item If her probe was equidistant to $v_1$ and $v_2$ then the robber can claim to have remained in $\{v_1, v_2\}$, and thus still be in $(v_1 \cup v_2 \cup N(v_1) \cup N(v_2))$ after the probe. If the cop always probes vertices that are equidistant from $v_1$ and $v_2$ then the robber can repeat this, evading capture indefinitely. 

\item If her probe was not equidistant to $v_1$ and $v_2$ then it was on a thread incident to at least one of them. Let us call the vertex she probes here $p$. Without loss of generality we may assume both that this probe is her first probe (ignoring any that were equidistant to $\{v_1, v_2\}$ and came before it), and that it is in the span of $v_1$. Following this probe the robber will now adopt his motivating strategy of moving towards a new original vertex. He can thus return the distance $(d(a, p) + 1)$, claiming that he was in $v_1$, and so moved to the neighbourhood of $v_1$ at the previous step. He will then continue moving down some thread towards another original vertex. Given that the robber now commits to follow this strategy the cop only needs to determine his destination before he reaches it. We will show that this is not possible by keeping a count of how many threads the cop has not yet eliminated. This first probe only eliminates the thread that $p$ is on, so following it the cop knows that the robber was at distance 1(and is now at distance 2) from $a$ and is moving along one of $(n-2)$ possible threads.
\end{enumerate}

Each subsequent probe can eliminate at most 2 threads for the robber, since probing anywhere on a thread from $v_1$ eliminates only that thread and probing inside $v_i \cdots v_j$ eliminates only $v_1 \cdots v_i$ and $v_1 \cdots v_j$. The robber can then remove those from his possible destinations and continue moving away from $a$. Hence after $t$ steps the robber is at distance $(t+1)$ from $a$ and at most $2t-1$ threads have been eliminated. After the $(m-1)^\text{st}$ step the robber reaches the remaining possible original vertices that he could have been heading towards. There were initially $n-1$ threads that he could have been heading down, and so after $(m-1)$ steps he could be on any of at least $n - 1 - (2(m-1) - 1) = n - 2m + 2 \geqslant 3$ possible threads. 

There are two possible scenarios to consider. Firstly, if as described above, the cop eliminates 2 threads on every probe except the first, then he would be unable to determine if the robber had gone halfway down a thread (pausing at the first near midpoint for a step if $m$ is odd) and then returned to $a$. Hence in this case after the $(m-1)^\text{st}$ probe the robber could move into any of at least 4 original vertices (those at either end of the uneliminated threads). If the cop did check to see if the robber turned around he would have to do so by probing on a vertex on a thread of $a$, and this would only eliminate one thread on that turn. This would mean she would eliminate one fewer thread, leaving him at least 4 threads he could be on after $(m-1)$ steps and thus at least 4 original vertices he could reach. In either case he can move into a set of at least 4 original vertices. The next probe by the cop must lie on some thread between at most 2 of them, so at least 2 will be equidistant to the next probe. The robber can now claim to have moved into that pair, and so can reach another pair of original vertices as required. Repeating this process lets him avoid capture indefinitely.
\end{proof}

We now turn our attention to the bound for the cop winning. We shall show that if $m > (n-1)/2$ then the cop can follow a simple strategy to locate the robber, which proceeds in three stages. This argument also requires $m \geqslant 7$, but that only leaves a few small cases to check manually. The second stage of this strategy works slightly differently depending on if $m$ is odd or even, but the motivating idea is the same so we present it in a single proof.

\begin{theorem}\label{CARlargemcopwin}
Let $m \geqslant n/2$ and $m \geqslant 7$. Then the cop wins on the graph $K_n^{1/m}$.
\end{theorem}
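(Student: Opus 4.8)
The plan is to give the cop an explicit three-stage strategy and to show that each stage terminates in boundedly many rounds. First I would record the metric of $K_n^{1/m}$: any two original vertices are at distance $m$, and a vertex at distance $a$ from $v_i$ on the thread $v_i\cdots v_j$ (hence distance $m-a$ from $v_j$) is at distance $\min\{a,m-a\}+m$ from every other original vertex. The consequence I would lean on is that a probe at an original vertex $v_k$ is highly informative: an answer $d<m$ forces the robber onto one of the $n-1$ threads at $v_k$ at distance exactly $d$ from $v_k$; an answer $m$ places him at an original vertex other than $v_k$; and an answer exceeding $m$ places him strictly inside a thread avoiding $v_k$, at distance (answer $-\,m$) from its nearer endpoint. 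Two trivialities used throughout: the robber's distance from a fixed vertex changes by at most $1$ per round, and he can switch threads only at an original vertex.

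\textbf{Stage 1: confine the robber to a pair of original vertices (or win).} The cop probes a fixed original vertex $v_1$ and then probes a sequence of original vertices, using the observations above to identify the thread the robber currently occupies; each such probe either locates him or rules out one more thread. The robber can frustrate this only by racing to a fresh original vertex and switching threads, which costs him at least $m-1$ rounds, during which the same counting as in the proof of Theorem \ref{CARmsmallrobberwin} applies: from $n-1$ candidate threads at least $2(m-1)-1$ are eliminated, leaving at most $n-2m+2\leqslant 2$. So Stage 1 ends with the robber known to be at one of at most two original vertices (and if only one, skip to Stage 3). This is the step that uses $m\geqslant n/2$; the hypothesis $m\geqslant 7$ is what keeps threads long enough for the bookkeeping to be valid, at the cost of a short list of small cases excluded by the statement.

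\textbf{Stage 2 (the crux, and the source of the parity split).} Suppose the robber may be at $v_1$ or $v_2$. A probe equidistant from $v_1$ and $v_2$ is useless — the robber claims to have stayed put — so the cop probes a vertex that is \emph{not} equidistant from them, the natural choice being the midpoint of the thread $v_1\cdots v_2$ when $m$ is even and a near-midpoint when $m$ is odd. Since the probe is non-equidistant, if the robber does not move she wins; so he must move off $\{v_1,v_2\}$ and is in effect committed to running toward a new original vertex, exactly as in the proof of Theorem \ref{CARmsmallrobberwin}. The cop follows and, by the Stage 1 count, again reduces him to at most two threads; but she must also allow for his having feinted a run and turned back (pausing at the first near-midpoint for one round when $m$ is odd — this is where the parity enters), so at the end of the chase he may be at any of at most three original vertices. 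The work of Stage 2 is to show that by choosing which threads to keep alive and re-probing the midpoint/near-midpoint of the appropriate thread, the cop can force strict progress — measured by a potential that decreases whenever a chase ends without a win — until the robber is pinned to at most three original vertices with no escape, or to two vertices lying close together on a common thread. I expect this termination argument to be the main obstacle: the distance computations are routine, but setting up a potential that genuinely decreases, ruling out cycling among equivalent-looking pairs of original vertices, and running the even and odd cases in parallel all need care (hence the paper's remark that one idea is carried out with two slightly different probe sequences).

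\textbf{Stage 3: the finish.} From a position where the robber is confined to at most three original vertices, or to two vertices at small mutual distance, a short fixed sequence of probes locates him. Since there are at most three candidate original vertices, their (pairwise far-apart, hence disjoint) neighbourhoods can be separated one probe at a time, in exactly the style of the explicit analysis in Theorem \ref{c78}: a well-chosen probe on a thread incident to one of them either catches the robber, locates him, or reduces to a pair of nearby vertices from which one more probe wins. Assembling the three stages yields a strategy locating the robber in a bounded number of rounds whenever $m\geqslant n/2$ and $m\geqslant 7$.
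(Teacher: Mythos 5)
Your three-stage outline has the right ingredients (probing original vertices, the $n-1-(2(m-1)-1)=n-2m+2\leqslant 2$ count, midpoint/near-midpoint probes with a parity split, the forced-return idea), but the central reduction is missing. Your Stage 1 asserts that the chase-counting from Theorem \ref{CARmsmallrobberwin} pins the robber to at most two original vertices, but that counting only applies once the robber is committed to running from a \emph{known} original vertex toward an unknown destination. At the start of the game the cop does not know which original vertex the robber is near: if she probes $v_k$ and receives $m+1$, he is one step inside some thread $v_i\cdots v_j$ with both endpoints unknown, and he is free to return to $v_i$, sit there, and later run again, re-opening candidates. "Each such probe rules out one more thread" does not survive this, since eliminated threads become reachable again whenever he passes through an original vertex. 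The paper spends the bulk of its proof (its Stage 2) on exactly this: an ordered elimination of original vertices tracked by a counter $r$, a five-way case analysis on the answer to probing $v_{r+1}$ (namely $0$, $1$, $m-1$, $m$, $m+1$), and a forced-return subroutine that drives a fleeing robber back into the vertex he came from or into a newly identified one, so that $r$ genuinely increases. You flag the need for "a potential that genuinely decreases, ruling out cycling among equivalent-looking pairs" and then defer it; that deferred step is the theorem.

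Two smaller points. First, in your Stage 2 you say the cop should probe a vertex \emph{not} equidistant from $v_1$ and $v_2$ and then name "the midpoint of the thread $v_1\cdots v_2$" as the natural choice -- but that midpoint is equidistant from $v_1$ and $v_2$; the paper instead probes the vertex at distance $1$ from one of the pair along the connecting thread, which separates all the relevant cases. Second, the endgame needs more than "a short fixed sequence of probes in the style of Theorem \ref{c78}": after $m-1$ probes the robber can be at distance $0,1,2,m-2,m-1$ or $m$ from $a$ along one of two surviving threads, and the single probe that distinguishes all of these (at distance $2$ from $v$ on $a\cdots v$) is precisely where the hypothesis $m\geqslant 7$ is used; your proposal attributes $m\geqslant 7$ vaguely to "keeping threads long enough for the bookkeeping" and never isolates this final check.
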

\begin{proof}
Our strategy for the cop runs in three stages. In the first stage she forces the robber to enter some original vertex, although she does not attempt to control which. In the second stage she narrows down the set of original vertices that he could be in to a set of size 2. In the final stage she locates him.

In Stage 1 the cop probes all the original vertices in any order until she either gets an answer equal to $m$ or finds two original vertices at distance less than $m$ from the robber. If she gets an answer equal to $m$ then she knows he has entered an original vertex, and moves to Stage 2. If this does not happen then he must have remained on a single thread. When probing either end of it she would get an answer less than $m$, and by noting which two original vertices this occurs on she can identify which thread he is on, and locate him. Thus either the robber is located or the cop moves to Stage 2.

In Stage 2 the cop wishes to narrow down the set of possible original vertices the robber could be in to a set of size 2. She will do this by eliminating candidates, so let us now re-order the original vertices as $v_1, \ldots, v_n$ such that $v_1$ is the last original vertex that she probed in Stage 1 -- hence the robber is known not to be in $v_1$ at the start of Stage 2.  Throughout she will track the candidates she has eliminated by maintaining a counter $r$ which is the index of the last vertex that she eliminated. Hence we set $r=1$ initially, and throughout this stage having eliminated the vertices up to $v_r$ she will be trying to eliminiate $v_{r+1}$ and thus increment $r$. We can assume throughout that $r < (n-2)$, as once she has eliminated $v_{n-2}$ there are only two vertices left, and she can proceed to Stage 3.

To eliminate $v_{r+1}$ the cop begins by probing this vertex, which can give one of five possible responses. Three of these are simple to deal with:

\begin{enumerate}
\item \emph{The distance is $0$}. The cop has found the robber and wins the game.
\item \label{CARStage2Casem-1}\emph{The distance is $m-1$}. The cop then knows that the robber was in an original vertex of higher index, and that he has left it, moving towards $v_{r+1}$. The cop can now force the robber to return to the original vertex that he came from by alternatingly probing $v_{r+1}$ and the remaining original vertices with indices higher than $r+1$ in order. If the robber moves into $v_{r+1}$ the cop will detect this and thus locate him easily, and if he does not return then she will eventually find the vertex he came from, and thus locate him. Hence he must return, which she will detect when she gets distance $m$. Along this process she will potentially eliminate not just $v_{r+1}$ but possibly many more candidates -- she proceeds by setting $r$ to the highest index that she has eliminated, and probing the next original vertex.
\item \emph{The distance is $m$}. The cop concludes that the robber is still in an original vertex of higher index than $(r+1)$. She increases $r$ by 1, and repeats the process by probing the next original vertex.
\item The most complicated case to deal with is when \emph{the distance is $m+1$}. The cop now concludes that the robber was in an original vertex of higher index, say $v_i$, and has left it moving towards another original vertex, say $v_j$. She now has two situations to consider. If $j \leqslant r$ then identifying $v_j$ before he reaches it will let her force him back into $v_i$ as in case (ii) above. If $j > r$ (and thus $j > (r+1)$ as if $j = (r+1)$ then the distance would have been $m-1$ which was case (ii) above), then she is less concerned with finding $v_j$, it suffices for her to force him into either $v_i$ or $v_j$, as then she can continue with the above process having eliminated all the original vertices up to $v_{r+1}$ as required. She will therefore address these situations sequentially.

Firstly the cop establishes whether $j > r$ by checking all the vertices in $v_1, \ldots, v_r$ to see if they are the destination for the robber. She can do this by a similar strategy to the worst case in Theorem \ref{CARmsmallrobberwin}. Ideally at each step she would check two possible destinations by probing the midpoints of the threads linking the first $r$ original vertices. In general she will not be able to do this for the first step, but she can begin by probing $v_1$ which eliminates that as a destination. If the robber announces distance $m$ then he has returned to $v_i$, and the cop can continue Stage 2 with $v_{r+1}$ eliminated. If he gives distance $m+1$ then he is still at distance 1 from $v_i$, and she can continue to probe through the set $\{v_2, \ldots, v_r\}$ until he moves in either direction or she eliminates all of them -- in the latter case we move to the next paragraph which outlines what to do once they have all been eliminated. If at some point the robber answers $m+2$ then she knows he was not heading to the vertex just probed but has moved to the second layer of vertices from $v_i$. From this point she can eliminate two vertices from $\{v_2, \ldots, v_r\}$ at each step by either probing midpoints if $m$ is even or near-midpoints if $m$ is odd. Either way she can tell whether he moves back towards $v_i$, in which case she moves back to probing single vertices once he gets back to the first layer to identify the exact moment he returns to $v_i$, or keeps eliminating pairs if he does not. If he continues to head away from $v_i$ by eliminating two vertices at each step she can eliminate $2(m-3) + 1$ before he reaches another original vertex. But as $m > (n-1)/2$ and there were only at most $(n-3)$ original vertices in $\{v_1, \ldots, v_r\}$ this leaves only two vertices that he can reach. By probing a vertex on the thread between these last two vertices she can distinguish whether he is in this pair, allowing her to move to Stage 3 if he is. Hence if he tries to move towards $\{v_1, \ldots, v_r\}$ she will either locate him, move to Stage 3 or force him back into $v_i$ successfully.

If he left the $v_i \cdots v_j$ thread either by reaching $v_i$ during the above probes or $v_j$ on the last the cop will detect this during them, allowing her to repeat Stage 2 having eliminated $v_{r+1}$. 
If he has not left this thread she could then probe original vertices with indices higher than $(r+1)$ to eliminate those directly until she found either of $v_i$ or $v_j$ -- in which case she would proceed as in Case (ii) to force him into the other end of the thread, and repeat Stage 2 with more vertices eliminated. 

\item The final case to consider is when \emph{the distance is 1}, which means the cop has found the original vertex that the robber was on, and he has moved 1 away from it. The strategy here is very similar to that above, she first makes sure that he is not moving towards $\{v_1, \ldots, v_r\}$, making sure to note if he returns to $v_i$, and then afterwards continues checking the remaining vertices in pairs. Carrying out the same analysis shows that in most cases he will be located when he returns to $v_i$, the only case when he is not is when he either moves halfway down a thread towards $\{v_{n-2}, v_{n-1}, v_n\}$ and then moves back to $v_i$ or goes all the way to $v_n$. But as this is the only case where the cop does not locate him directly if it occurs she will know, and thus be able to move to Stage 3 knowing he is in either of $v_i$ or $v_n$.
\end{enumerate}

We now move to Stage 3, which starts after the cop makes some probe and knows the robber is in one of two original vertices. We will label them as $\{a, b\}$, and note that he can move to the neighbourhood of them before the cop's first probe in Stage 3. For her first probe the cop probes the vertex at distance 1 from $a$ on the $a \cdots b$ thread. This allows her to distinguish whether the robber was on $a$ or $b$ before, and whether he is on the $a \cdots b$ thread now or another one. The cop wins immediately unless the robber answers distance 2 or distance m, in which case he has left the initial vertex he was on and moved towards an initial vertex other than $a$ or $b$. Without loss of generality we shall assume he was on $a$ and is thus now moving to one of the other $(n-2)$ possible locations, noting that this first probe reveals him to be at distance 1 from $a$.  

Her strategy now reduces to finding which thread he is on before he can reach the other end of it, being sure to note if he returns to $a$. The second probe varies according to whether $m$ is even or odd.  If $m$ is even then the cop probes a midpoint of a thread between two original vertices that have not been eliminated yet, whereas if it is odd then the cop probes a neighbour of a near-midpoint, say the vertex on the $c \cdots d$ thread that is distance 3 further from $d$ than $c$. In either case we can distinguish whether the robber is heading to that pair or not -- the one slightly complicated case is if $m$ is odd and he responds with $(m-1)/2+m$ in which case he could have remained at distance 1 from $a$ or be distance 2 from $a$ heading specifically towards $d$. If following this probe the robber uses the fact that he could have been distance 2 from $a$ to move to the vertex distance 3 from $a$ the cop will notice on her next probe and locate him. In this case the cop can therefore assume that the robber is at distance 1 and effectively eliminate $c$ from the possible destinations, doing so without him having moved closer to another original vertex so effectively for free. Thus the cop's second probe can always eliminate two possible destinations for the robber -- and by probing at midpoints if $m$ is even or near-midpoints if $m$ is odd this also holds for the subsequent probes. 

After $t$ probes he will be within distance $t$ of $a$, and she will have eliminated $1 + 2(t-1)$ possible destinations. Thus after $(m-1)$ probes there are only at most three original vertices left that he could be moving between, two possible destinations (which we shall refer to as $v$ and $w$) and $a$. Including the possibility that he turned around at the midpoint or near-midpoint (according to the parity of $m$), and assuming at each step he continued to move (as otherwise it is easier to locate him), this means that following the $(m-1)^\text{st}$ probe he is either distance $0, 1, 2, (m-2), (m-1)$ or $m$ from $a$ along either the $a \cdots v$ thread or the $a \cdots w$ thread. However, in this case he can be located by probing the vertex at distance 2 from $v$ along the $a \cdots v$ thread, provided $m \geqslant 7$, so hence he can be located even in this worst case scenario, completing the proof.
\end{proof}

This answers the question of who wins on $K_n^{1/m}$ for all but a small number of cases, which we summarise in the conclusion. We now turn our attention to bipartite graph in the following section.

\section{Subdivisions of complete bipartite graphs}
\label{BipartiteSection}

We now turn our attention to complete bipartite graphs, where we are able to determine the winning player on $K_{a,b}^{1/m}$ for any $a$, $b$, and $m$. In \cite{CCDEW} it is shown that the cop wins for $m\geqslant \max\{a,b\}$, but in fact the cop wins if and only if $m \geqslant \min \{ a,b \} - 1$, provided $a,b \geqslant 4$. Throughout this section we shall write $A$ and $B$ for the sets of original vertices in $K_{a,b}^{1/m}$ corresponding to the two vertex classes of $K_{a,b}$, with $|A|=a$ and $|B|=b$.

\begin{theorem}\label{biplower}If $a,b\geqslant 3$ and $m \leqslant \min\{a,b\}-2$ then $K_{a,b}^{1/m}$ is not locatable.
\end{theorem}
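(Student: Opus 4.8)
The plan is to exhibit an explicit evasion strategy for the robber, in the spirit of the proof of Theorem~\ref{CARmsmallrobberwin}. The invariant to maintain is that after some probe the set of positions consistent with the robber's answers contains two original vertices lying in the \emph{same} vertex class of $K_{a,b}$ (hence at distance $2m$ from one another), with the cop unable to tell which of them he occupies. Such a configuration is easy to reach from the start --- the robber can engineer one by beginning, from his initial vertex, the manoeuvre described below --- so it suffices to show that from any such configuration he can, after finitely many further probes, reach another one, all the while keeping at least two positions consistent with his answers, so that he is never located.

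So suppose the consistent set contains two original vertices $v_1,v_2$ in one class, say $v_1,v_2\in B$. While the cop probes vertices equidistant from $v_1$ and $v_2$ the robber simply claims to have remained in $\{v_1,v_2\}$. As soon as she probes a vertex $p$ not equidistant from them, $p$ must lie on a thread incident to one of $v_1,v_2$; say it is incident to $v_1$. The robber now commits to the escape used in Theorem~\ref{CARmsmallrobberwin}: he claims to have been at $v_1$, now to be at distance $1$ from $v_1$, and thereafter to walk monotonically down one of the $a$ threads joining $v_1$ to the vertices of $A$, reaching its far endpoint in $A$ after $m-1$ further steps. At each stage he maintains a set of candidate threads and answers with the common distance from the probed vertex of all the corresponding positions.

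The point where bipartiteness does the work is the rate at which the cop can discard candidate threads. In $K_{a,b}$ every thread has exactly one endpoint in $A$, and a short case check --- according as the probed vertex lies inside a thread $a_i\cdots b_k$ or is itself an original vertex --- shows that a single probe can be inconsistent with the robber's claimed position on at most one of the threads from $v_1$, namely the one with $A$-endpoint $a_i$ (a probe at an original vertex of $B$ other than $v_1$ eliminates none). Contrast the complete-graph argument, where a probe inside $v_i\cdots v_j$ can eliminate two threads at once. Hence over the at most $m$ probes of the escape at most $m$ threads are discarded, leaving at least $a-m\geqslant2$; when the robber reaches the far endpoints he is therefore at one of at least two vertices of $A$, with the cop unable to say which --- once more a pair of original vertices in the same class.

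To close the loop one must observe that this pair may have size exactly $2$, but that a single probe can distinguish at most one vertex of a fixed class from the others (again because every thread has a unique endpoint in that class), so either the cop's next probe is equidistant from the two vertices --- and the robber waits --- or it is incident to one of them, in which case he launches a fresh escape from either one; the symmetric hypothesis $m\leqslant b-2$ ensures that an escape starting in $A$ likewise leaves him at at least two original vertices of $B$. Thus the consistent set never drops below two vertices, and the robber is never located. I expect the main obstacle to be the careful verification of the ``at most one thread discarded per probe'' claim, together with the bookkeeping that $\min\{a,b\}-m\geqslant2$ threads survive every escape; it is precisely the bipartite structure that both slows the cop to one elimination per probe and makes the surviving two vertices impossible to separate with a single further probe, so the turn-around arguments needed in the complete-graph case are not required here.
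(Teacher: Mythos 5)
Your proposal is correct and rests on the same key computation as the paper's own proof: in $K_{a,b}^{1/m}$ every probe is consistent with the robber being at distance $l$ from his last original vertex on all but at most one of the threads leaving it (since all those positions are equidistant from any probe not on the exceptional thread), so $m\leqslant\min\{a,b\}-2$ probes cannot reduce the candidate threads below two. The only difference is organizational --- the paper's robber commits to occupying an original vertex at every $m$-th probe, alternating between the two classes, which removes the need for your waiting phase and for the observation that a non-equidistant probe must lie on a thread incident to one of the two same-class candidates --- but the substance of the argument is essentially identical.
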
 
\begin{proof}We will prove the stronger statement that the robber wins even if he is required to be at an original vertex for every $m^\text{th}$ probe, alternating between $A$ and $B$, so that he is in $A$ at the time of the $km^\text{th}$ probe for every even $k$. We show that, provided the cop has not won after the $km^\text{th}$ probe, the robber can ensure that she has not won by the $(k+1)m^\text{th}$ probe. For ease of writing, we assume that $k$ is even. 

Suppose that the robber is at $u\in A$ for the $km^\text{th}$ probe, but that the $km^\text{th}$ probe does not locate him uniquely. We show that, no matter which vertices the cop probes, there are two possible threads for the robber to travel along between the $km^\text{th}$ and $(k+1)m^\text{th}$ probes, which the cop is unable to distinguish between, so that she will not be able to win by time $(k+1)m$. Suppose her $(km+l)^\text{th}$ probe (for some $1\leqslant l\leqslant m$) is at vertex $z$, which is on the thread $x \cdots y$ for some $x\in A$ and $y\in B$. For each $v\in B$, write $w_{v,l}$ for the vertex on the thread $u\cdots v$ at distance $l$ from $u$. If $x\neq u$ then for any $v\neq y$ we have $d(z,w_{v,l})=\min\{d(z,x)+2m-l,d(z,y)+m+l\}$, whereas if $x=u$ then, again for any $v\neq y$, we have $d(z,w_{v,l})=d(z,x)+l$. Suppose that for each $l$ with $1\leqslant l \leqslant m$ the answer consistent with the robber being at any one of the vertices $w_{v,l}$ for $v\neq y$ is received from the $(km+l)^\text{th}$ probe. Then each probe eliminates at most one of the threads leaving $u$, and since $m$ probes have been made, and $m \leqslant \min\{a,b\}-2$, at least 2 remain, so the cop has not yet won.
\end{proof}

Note that if $a, b\geqslant 2$ then the robber can win on $K_{a,b}$ by ensuring he is in the opposite part to the vertex the cop probes at every time. In the case where $\min\{a, b\} = 3$, Theorem \ref{biplower} can be strengthened to say that the robber will win for $m=2$.

\begin{lemma}If $\min\{a,b\}=3$ then the graph $K_{a,b}^{1/2}$ is not locatable.
\end{lemma}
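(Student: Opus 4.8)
The plan is to run an adversary argument on the confusion set. For a given cop strategy, write $S_t$ for the set of vertices consistent with the answers the robber has given to the first $t$ probes; I will specify how to answer each probe so that $|S_t|\geqslant 2$ for all $t$. As in the proof of Theorem~\ref{biplower}, this yields a winning robber strategy: the trajectories consistent with these answers form a finitely branching infinite tree, so by König's lemma one is infinite, and the robber following it is never located.

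Suppose without loss of generality that $|A|=3$; write $A=\{a_1,a_2,a_3\}$, $B=\{b_1,\dots,b_b\}$ with $b\geqslant 3$, and let $p_{i,w}$ be the midpoint of the thread $a_i\cdots b_w$. The distances in $K_{a,b}^{1/2}$ are easily tabulated: $d(a_i,a_j)=d(b_v,b_w)=4$ for $i\neq j$, $v\neq w$; $d(a_i,b_w)=2$; $d(a_i,p_{j,w})$ is $1$ if $i=j$ and $3$ otherwise, and symmetrically for $d(b_w,p_{i,v})$; and $d(p_{i,v},p_{j,w})\in\{0,2,4\}$, being $0$ only if $(i,v)=(j,w)$, $2$ if exactly one coordinate agrees, and $4$ if neither does. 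I will maintain the invariant that $S_t$ contains a member of a fixed finite family $\mathcal{F}$ of \emph{safe configurations}, each of size at least $2$: the pairs $\{a_i,a_j\}$ and $\{b_v,b_w\}$; the ``columns'' $T_w=\{p_{1,w},p_{2,w},p_{3,w}\}$ and ``rows'' $S_i=\{p_{i,w}:w\in B\}$; and the ``bent'' sets $E_{i,v,j}=\{p_{i,w}:w\neq v\}\cup\{p_{j,v}\}$ and $F_{v,w,i}=\{p_{j,v}:j\neq i\}\cup\{p_{i,w}\}$, for $i\neq j$ and $v\neq w$.

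Since $S_{t+1}\supseteq\{x\in N[C]:d(z_{t+1},x)=\alpha\}$ for any answer $\alpha$ I choose (such an $\alpha$ is automatically a legal response once the set on the right is nonempty), the inductive step reduces to a purely local claim: for every $C\in\mathcal{F}$ and every vertex $z$, some value $\alpha$ makes $\{x\in N[C]:d(z,x)=\alpha\}$ contain a member of $\mathcal{F}$. Using $\operatorname{Aut}(K_{a,b}^{1/2})=S_3\times S_b$ (together with the $A\leftrightarrow B$ swap when $b=3$) this is a finite check, with a handful of representative cases per configuration type — a probe at an original vertex, at a midpoint lying in the configuration, at one lying in the same row or column, or elsewhere — each resolved by reading off the distance table. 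The salient features are that from a pair in $A$ one can always reach another pair, a row $S_i$, or a set $E_{i,v,j}$; from a pair in $B$ one reaches another pair, a column $T_w$, or a set $F_{v,w,i}$; the rows, columns and bent sets map among themselves and the pairs; and one can always avoid passing to a ``bad'' set — a pair of midpoints sharing an endpoint, or a pair $\{a_i,b_j\}$ — from which the cop could force a win. For the base case, given the cop's $0^{\text{th}}$ probe $z_0$, the distances $d(z_0,a_1),d(z_0,a_2),d(z_0,a_3)$ cannot all be different (inspecting the table according to whether $z_0$ lies in $A$, in $B$, or is a midpoint), so two vertices of $A$ are equidistant from $z_0$; the robber starts at one of them and $S_0$ contains that pair.

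The main obstacle is getting $\mathcal{F}$ exactly right. The thread-counting argument of Theorem~\ref{biplower} is one elimination short in this regime ($m=\min\{a,b\}-1$), so the pair configurations alone cannot be kept — probing a midpoint incident to the pair forces a passage into the midpoint layer — while the most natural midpoint configurations, the rows and columns, are themselves not closed, which is why the hybrid sets $E_{i,v,j}$ and $F_{v,w,i}$ must be included. One must also keep track of the fact that $b$ may exceed $3$: a ``partial row'' of midpoints is unsafe when $b=3$ but not obviously so for larger $b$, and the verification has to route around such sets uniformly in $b$ rather than relying on them.
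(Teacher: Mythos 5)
Your proposal is correct, but it takes a noticeably different route from the paper's proof. The paper also runs an adversary argument, but with a much leaner invariant: it only ever tracks a pair of possible robber locations lying in the same class ($\{u,v\}\subseteq A$ or $\subseteq B$), and it allows the invariant to be restored after \emph{two} probes rather than one. Concretely, if the cop probes a vertex equidistant from $u$ and $v$ the robber stays put; if she probes $u$, $v$, or a midpoint adjacent to one of them, the robber slips into the midpoint layer (all neighbours of $u$ are equidistant from $u$, or the three midpoints of $u\cdots y$, $u\cdots z$, $v\cdots x$ are all at distance 2 from the probed midpoint), and then whatever she probes next, at least two of the relevant vertices of the opposite class are equidistant, so a same-class pair reappears by probe $t+2$. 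This two-step lookahead is exactly what lets the paper avoid cataloguing midpoint configurations at all. You instead insist on a family $\mathcal{F}$ that is closed after every single probe, which forces you to include rows, columns and the bent sets $E_{i,v,j}$, $F_{v,w,i}$, and to steer around the bad sets (same-thread-endpoint midpoint pairs, mixed pairs, and partial rows when $b=3$). Your family does work: I checked the closure claim case by case against your distance table (e.g.\ from $\{a_1,a_2\}$ a probe at $p_{1,v}$ with answer 2 yields $E_{1,v,2}$; from $E_{1,v,2}$ a probe at $p_{2,u}$, $u\neq v$, with answer 3 yields $\{a_1\}\cup(B\setminus\{b_u\})$, which contains a pair of $B$ since $b\geqslant 3$; from a column $T_v$ a probe at $a_i$ with answer 4 yields the pair $A\setminus\{a_i\}$; and so on), and your base case (two vertices of $A$ equidistant from any first probe) is right. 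The one criticism is that this finite verification is the entire substance of the argument and you assert it rather than perform it; in a submitted proof it would have to be written out, whereas the paper's two-step invariant needs only the three short cases above. Also, the appeal to K\"onig's lemma is unnecessary: maintaining $|S_t|\geqslant 2$ for all $t$ already means no cop strategy locates the robber in bounded time, since each surviving vertex of $S_t$ is by construction the endpoint of an actual robber trajectory consistent with all answers given.
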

\begin{proof}Suppose that after the cop's $t^\text{th}$ probe there are two possible locations for the robber which are both in $A$ or both in $B$, say $u$ and $v$ with $u, v\in A$. We show that the robber can ensure either that there are still two possible locations, both in $A$ or both in $B$, either after the $(t+1)^\text{st}$ probe or after the $(t+2)^\text{nd}$. If the $(t+1)^\text{st}$ probe is equidistant from $u$ and $v$ this is trivial, as the robber can return the distance to $u$ or $v$. If the $(t+1)^\text{st}$ probe is $u$ or $v$ at time $t+1$ then all neighbours of $u$ will be equidistant, so the robber can claim to be at one of them. Any vertex in $A$ is equidistant from all vertices in $B$, and any other vertex is equidistant from all but one of the vertices in $B$, so no matter what vertex the cop chooses for her $(t+2)^\text{nd}$ probe, there will be at least $b-1\geqslant 2$ vertices in $B$ at the same distance from it. By this point the robber can have reached any of these without being caught. The only remaining case is for the $(t+1)^\text{st}$ probe to be at a vertex which is adjacent to either $u$ or $v$, say the vertex $w$ between $u$ and $x$ with $x\in B$. Let $y$ and $z$ be two other vertices in $B$. The midpoints of the threads $u\cdots y$, $u\cdots z$ and $v\cdots x$ are all at distance 2 from $w$, so if the robber moves to one of these the cop cannot determine which. Then no matter which vertex the cop probes at time $t+2$, some two of $x$, $y$ and $z$ are at the same distance, and so the robber can ensure there are two possible locations in $B$ after this probe.
\end{proof}

We have shown that $K_{a,b}^{1/m}$ is not locatable for $m \leqslant \min\{a,b\}-2$ when $\min\{a,b\}>3$, or for $m \leqslant \min\{a,b\}-1$ when $2 \leqslant \min\{a,b\}\leqslant 3$. Next we show that in all other cases $K_{a,b}^{1/m}$ is locatable. Note that the cop can win on the star $K_{1,b}$ by probing leaves in turn. This covers the case $\min\{a,b\}=1$. Next we deal with the case $\min\{a,b\}=2$.

\begin{lemma}The graph $K_{2,b}^{1/2}$ is locatable for any $b\ge 2$.
\end{lemma}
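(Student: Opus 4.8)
The graph $K_{2,b}^{1/2}$ consists of two original vertices $A=\{a_1,a_2\}$, $b$ original vertices $B=\{u_1,\dots,u_b\}$, and for each pair $(a_i,u_j)$ a thread of length $2$ through a single midpoint $m_{ij}$. The plan is to give an explicit three-phase strategy for the cop, analogous in spirit to the one in Theorem~\ref{CARlargemcopwin} but much shorter because $m=2$ makes threads tiny. First I would record the distance structure: from $a_1$ the vertices are $a_1$ (distance $0$), the midpoints $m_{1j}$ (distance $1$), the vertices $u_j$ (distance $2$), the midpoints $m_{2j}$ (distance $3$), and $a_2$ (distance $4$); from a midpoint $m_{11}$ the distances are $0$ to itself, $1$ to $a_1$ and $u_1$, $2$ to $a_2$ and to every $m_{1j}$, $3$ to every other $u_j$ and to $m_{21}$, and $4$ to the remaining $m_{2j}$. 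This table is the only computation needed and it drives every case.

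Phase~1: the cop probes $a_1$, then $a_2$. From the two answers she learns the ``layer'' the robber currently occupies. The generic outcome is that she learns the robber is (after his move) at a specific distance from $a_1$ and from $a_2$; unless he is sitting at a midpoint the pair of distances pins him to a set that is either a single vertex (win) or exactly the set of midpoints at a given layer, e.g.\ $\{m_{1j}:j\}$ or $\{m_{2j}:j\}$, or else to $\{u_j : j\}$ when both answers are $2$. Phase~2 handles these possibilities. If the possible set is $\{u_j\}$ (all of $B$), or a set of midpoints on the $a_1$-side, the cop now probes $a_1$ repeatedly, or alternates $a_1$ and $a_2$: because from $u_j$ the robber can reach only $m_{1j}$, $m_{2j}$, $a_1$ or $a_2$, two consecutive probes of $a_1,a_2$ will distinguish which thread he is on unless he is oscillating between the two original vertices — and a vertex in $A$ is detected immediately by probing it. The key point to check is that with only $b$ threads and probes that each eliminate one or two threads, the cop reduces the candidate set of threads to two within a bounded number of moves; I would phrase this as: whenever the candidate set is contained in one ``star'' (all threads through a single $a_i$), each probe of the \emph{other} original vertex splits off information, and each probe of a midpoint eliminates one thread, so after at most $b$ probes only two threads remain.

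Phase~3: the cop is down to two threads, i.e.\ the robber is known to be among a constant-size set of vertices spanning two threads. Here the graph is so small — each thread has one interior vertex — that a short explicit case analysis, probing one well-chosen midpoint and then one original vertex, locates him; this mirrors Stage~3 of Theorem~\ref{CARlargemcopwin} but with the worst-case ``$m\ge 7$'' obstruction absent because the threads cannot be long enough to hide a turnaround. The main obstacle, and the part that needs genuine care rather than routine checking, is the bookkeeping in Phase~2 when the initial candidate set is all of $B$: the cop must simultaneously (a) detect a robber who ducks back to an original vertex, (b) detect one who commits to a thread, and (c) not let the count of unexamined threads stay above two forever. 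I expect the clean way to handle this is to have the cop probe $u_1,u_2,\dots$ in turn (each probe of $u_j$ returns $0$, $2$, or $4$ according as the robber is at $m_{ij}$ for this $j$, at some original vertex, or at $m_{ij}$ for another $j$ or at a $u$-layer vertex of another thread), using the ``$2$'' answer to collapse to the original-vertex case already analysed, the ``$0$'' answer to win or reduce to two adjacent possibilities, and the ``$4$'' answers to eliminate threads one at a time until two remain.
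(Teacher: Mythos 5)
There is a genuine gap, and it sits at the heart of your Phase~2. You claim that when the candidate set is all of $B$, or all midpoints on the $a_1$-side, ``two consecutive probes of $a_1,a_2$ will distinguish which thread he is on.'' They cannot. In $K_{2,b}^{1/2}$ every vertex satisfies $d(a_1,v)+d(a_2,v)=4$, and the graph is symmetric under any permutation of the $b$ threads; so a probe of $a_1$ or $a_2$ reveals only the robber's \emph{layer}, never his thread. A robber who simply sits at a midpoint $m_{1j}$ answers $1$ to every probe of $a_1$ and $3$ to every probe of $a_2$, forever, and is never located. So the mechanism you propose for shrinking the candidate set does not shrink it at all; the cop is forced to probe vertices of $B$ (or midpoints), and that is where the real work of the proof lies. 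Your closing parenthetical does move in this direction, but the distances there are wrong (probing $u_j$ returns $1$, not $0$, for the robber at $m_{ij}$, and $3$, not $4$, for $m_{ij'}$ with $j'\neq j$), and --- more importantly --- the resulting case analysis is not closed under your phase structure: an answer of $3$ from a probe of $u_{j_0}$ leaves $2(k-1)$ candidate midpoints spread over \emph{both} sides of the graph, which needs a further probe of $a_1$ to split into one-sided cases, and the ``at most $b$ probes'' termination claim is never justified against a robber who retreats through $a_1$ or $a_2$ to re-enter previously eliminated threads.

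For comparison, the paper's proof replaces your three phases by a mutual induction on the size $k$ of the candidate set, over exactly two configuration types: ``robber in a $k$-subset of $N(a_1)$ (or of $N(a_2)$)'' and ``robber in a $k$-subset of $B$.'' In the first configuration the cop probes a $B$-vertex adjacent to a candidate midpoint; in the second she probes a candidate $B$-vertex, following an answer of $3$ with a probe of $a_1$. In every case each answer either wins outright or lands back in one of the two configurations with $k$ reduced by $1$, which is precisely the closure property your sketch is missing. If you adopt that bookkeeping, the correct distance table you computed at the start (all five answers $0,\dots,4$ are distinguishable from a probed $B$-vertex) does the rest.
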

\begin{proof}Write $x$ and $y$ for the two vertices in $A$. Let the cop start by probing $x$. If she receives the answer 2 the robber is in $B$. If the answer is 0 or 4 she has won. If it is 1 or 3 she knows that the robber is adjacent to $x$ or $y$ respectively. 

Now we show that the cop can win from a position in which she knows that the robber is in a particular subset of the neighbourhood of $x$ (or, equivalently, if she knows the robber is in a particular subset of the neighbourhood of $y$), and she can win from a position in which she knows that the robber is in a fixed subset of $B$. We prove both simultaneously by induction on the size of the subset, $k$. In each case if $k=1$ she has already won. 

If the robber was at one of $k$ neighbours of $x$, the cop probes one of the $k$ adjacent vertices of $B$. If the answer is at most 2 then the robber is caught. If the answer is 3 then he is known to be at one of $k-1$ neighbours of $x$ and if it is 4 he is known to be at one of $k-1$ vertices of $B$; in either case we are done by induction.

If the robber was at one of $k$ vertices in $B$, the cop probes one of these. An answer of 2 is impossible, and if the answer is 1 then she can win by next probing $x$. If the answer is 4 then she knows the robber is at one of $k-1$ vertices of $B$, and we are done by induction. If the answer is 3 then she probes $x$ next; now if the answer is 0 or 4 she has won, and if it is 1, 2, or 3 she has reduced to one of $k-1$ vertices adjacent to $x$, in $B$, or adjacent to $y$ respectively, so we are done by induction. 
\end{proof}

Finally we show that, provided $m \geqslant 3$, the only non-locatable graphs of the form $K_{a,b}^{1/m}$ are those given in Theorem \ref{biplower}.

\begin{theorem}Let $a,b\geqslant 3$. If $m \geqslant \min \{a,b\}-1$ and $m\geqslant 3$ then $K_{a,b}^{1/m}$ is locatable.
\end{theorem}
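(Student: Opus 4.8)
The plan is to follow the three‑stage template of Theorem~\ref{CARlargemcopwin}, adapted to the bipartite metric; assume throughout that $a=\min\{a,b\}$, so $|A|=a\le m+1$. I would first record the distance facts in $K_{a,b}^{1/m}$: two original vertices lie at distance $2m$ if they are in the same part and at distance $m$ if they are in opposite parts, and if the robber is strictly inside the thread $x\cdots y$ (with $x\in A$, $y\in B$) at distance $i$ from $x$, then probing $x$, probing $y$, probing another vertex of $A$, or probing another vertex of $B$ returns $i$, $m-i$, $2m-i$, $m+i$ respectively. In Stage~1 the cop probes every original vertex once; if the robber never leaves a single thread she locates him (that thread is the unique one whose two endpoints both returned values less than $m$, and those values pin him down), and otherwise some probe returns $0$ (a win) or $m$, whereupon she knows the robber currently sits at an original vertex and passes to Stage~2.

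The engine of Stages~2 and~3 is the interplay of two observations. First, while the robber runs along consecutive steps of a thread from an original vertex $p$ towards an original vertex $q$ (a \emph{race}), any single probe of an original vertex $w$ tells the cop the robber's current distance along the race \emph{and} whether $w$ is the endpoint $p$ or $q$; so probing through the part containing $q$ lets her eliminate one candidate destination thread per probe, and probing through the part containing $p$ lets her track the source. Second --- the key new point --- if she ever knows the robber lies on the union of two threads $q\cdots r'$ and $q\cdots r''$ sharing the original vertex $q$, then this union is isometric in $K_{a,b}^{1/m}$ to a path of length $2m$, so a single probe at $r'$ locates him; likewise if only one such thread remains, a probe at either endpoint locates him.

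Stage~2 then consists of forcing the robber into races and arranging that the \emph{decisive} race is always directed toward $A$. A race into $A$ lasts $m$ steps and, starting from any point at which the cop knows the vertex $q$ the robber left, she gets at least $m-1$ probes to narrow the candidate destination threads out of $A$; since $|A|\le m+1$ this leaves at most two, necessarily sharing the endpoint $q$, and she wins by Stage~3. When $a\le m$ the race narrows to a single thread and she wins even more directly; only the boundary value $a=m+1$ needs an extra step to convert ``two surviving candidates, both in $A$'' into a win, and it is here (and in the parallel step when $\min\{a,b\}=3$) that the hypothesis $m\ge 3$ is used --- for $u,v$ in a common part, a probe at distance $1$ from $u$ separates ``robber was at $u$'' from ``robber was at $v$'' precisely when $m>2$.

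The main obstacle is that $b$ may be far larger than $m$: the cop can never afford to search through $B$ or to chase the robber thread by thread into $B$, so she must arrange never to be forced to. The bulk of the work is to show that, through the robber's forced alternation between the parts, the cop can always keep her knowledge organised so that the next narrowing she needs to perform is a race into the small part $A$ --- equivalently, that whenever the robber plunges into $B$ she can still bound, within a set whose size stays under control, the vertex of $A$ at which he must next surface, and bring the path trick to bear there. Making this bookkeeping consistent, together with handling the $a=m+1$ off‑by‑one, is the heart of the proof.
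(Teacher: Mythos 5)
Your Stage 1 and the race-into-$A$ arithmetic ($|A|\le m+1$, one candidate destination excluded per probe of $A$) are sound in outline and close to what the paper does, but the proposal leaves unproved exactly the part you yourself flag as ``the heart of the proof'': how the cop retains control when the robber is in, or heading back into, the large part $B$. That is not deferable bookkeeping --- it is where the argument lives, and your sketch gives no decreasing quantity that forces termination; nothing prevents the robber from repeatedly plunging into $B$ at vertices the cop cannot pin down, and your plan of starting each decisive race only ``at a point at which the cop knows the vertex $q$ the robber left'' presupposes knowledge she cannot in general obtain when $b\gg m$. The paper supplies the missing device: an induction on the size of a subset $B'\subseteq B$ known to contain the robber (initially $B'=B$ after Stage 1). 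The cop probes the vertex at distance $1$ from some $y\in B'$ on a thread $x\cdots y$; since $m\ge 3$ the possible answers $0,1,2,2m-2,2m-1,2m$ are pairwise distinct, and each one either wins, strictly shrinks $B'$ (so induction applies), or certifies that the robber is travelling on a thread avoiding $x$, in which case she probes vertices of $A\setminus\{x\}$ in turn: by the time he can reach $A$ she has excluded $x$ together with at least $m-1$ probed vertices, i.e.\ at least $m\ge a-1$ vertices of $A$, so his arrival vertex is determined; and if he instead drops back into $B$ she detects this (answer $m$ from a probe in $A$) with $B'$ again reduced. Some such induction or potential function is indispensable, and it is absent from your proposal.

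Two secondary points. Your ``key new point'' --- that once the robber is confined to the union of two threads sharing an original vertex $q$, a single probe at a far endpoint $r'$ locates him --- fails as stated because the robber moves before the probe: from $q$ he may step onto a third thread, and a vertex at distance $1$ from $q$ off the union lies at distance $m+1$ from $r'$, exactly as does the vertex at distance $1$ from $q$ inside the union, so the probe does not distinguish them (a similar escape is possible from the far endpoints). This is repairable with the kind of care taken in the endgame of Theorem~\ref{CARlargemcopwin}, but it needs an argument. Also, in your Stage 1 the answer $2m$ (robber at another original vertex of the same part as the probe) must be admitted alongside $0$ and $m$ as evidence that he sits at an original vertex. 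These are fixable; the missing treatment of the large part $B$ is the genuine gap.
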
 
\begin{proof}Suppose $a\leqslant b$. Again we give a two-stage winning cop strategy. In the first stage we show that the cop can win or establish that the robber is in $B$, and in the second stage we show that she may win once he knows that the robber is in $B$.

In the first stage, the cop probes vertices in $A$ in turn until she receives an answer of $m$ (indicating that the robber is in $B$) or less than $m$. This must eventually happen, since if the robber does not reach $B$ he must remain nearer one particular vertex in $A$ than any other, and when the cop probes this vertex she will get an answer of less than $m$. In this case write $x$ for the vertex in question. Once the cop has found $x$, the robber cannot leave his current thread without moving either to $x$ or to some vertex in $B$, so the cop then probes vertices in $B$ until she receives an answer of $2m$ (indicating that the robber is in $B$) or at most $m$ (in which case she can determine his location).

In the second stage we show that the cop may win from a position where the robber is known to be in a fixed subset of $B$, by induction on the size of the subset, $k$. This is true for $k=1$ as she has already won. If $k>1$ then write $B'$ for the set of $k$ vertices in question. The cop starts by probing the vertex adjacent to $B'$ on the thread $x\cdots y$ for some $x\in A$ and $y\in B'$. The possible answers are 0 (if the robber is at that vertex), 1 (if he is at $y$), 2 (if he is at some other neighbour of $y$), $2m-2$ (if he is on another thread leading to $x$), $2m-1$ (if he is at a vertex of $B'$ other than $y$), and $2m$ (if he is on a thread which does not include $x$ or $y$). Since $m\geqslant 3$, these are all different. An answer of 0 or 1 is an immediate win for the cop, and after an answer of $2m-1$ she wins by the induction hypothesis. After an answer of $2m-2$ the cop probes vertices of $B'$ until either she receives an answer of at most $m$, winning, or she receives an answer of $2m$, in which case she knows the robber is at one of at most $k-2$ vertices of $B'$ and she wins by the induction hypothesis. After an answer of $2$ or $2m$, the robber must be in a thread which does not reach $x$. The cop now probes vertices of $A$, other than $x$, in turn, until she receives an answer of $m$, $2m$, or less than $m$. One of these must eventually happen since either the robber reaches one end of the thread he is currently on, or he remains in the same thread until such time as the cop probes its end in $A$. If the answer $2m$ occurs first, the cop knows that the robber has reached some vertex $u\in A$ which is neither $x$ nor one she has probed since the robber left $B$. Since the robber has taken at least $m$ steps to reach $A$, she has probed at least $m-1$ vertices in $A$, and together with $x$ she has eliminated at least $m\geqslant a-1$ vertices of $A$, so there is only one possibility and the robber is caught. If the answer $m$ occurs first then the robber is at a vertex of $B'$, and, since the cop knows whether or not this is $y$, she has either caught the robber or reduced to a set of $k-1$ vertices, so wins by the induction hypothesis. If an answer less than $m$ occurs first, say when probing $u$, then the cop has won if that answer is 0, or if the robber was initially known to be on a thread meeting $y$. Otherwise, she knows that the robber is on some thread $u\cdots v$ for $v\in B'\setminus\{y\}$; now she proceeds by probing vertices of $B'\setminus\{y\}$ in turn until she receives an answer of at most $m$ (in which case she has won) or of $2m$ (in which case she knows that the robber is at some vertex in $B'\setminus\{y\}$, and so wins by induction).
\end{proof}

We can now completely determine which graphs of the form $K^{1/m}_{a,b}$ are locatable. If $\min\{a,b\}\geqslant 4$ then $K^{1/m}_{a,b}$ is locatable if and only if $m\geqslant \min\{a,b\}-1$, whereas if $\min\{a,b\}\leqslant 3$ then $K^{1/m}_{a,b}$ is locatable if and only if $m\geqslant \min\{a,b\}$.

\section{Conclusion and Open Problems}

We note that in the proof of Theorem \ref{CARlargemcopwin} the condition that $m \geqslant 7$ is only required for the final part of Stage 3, which only arises when $m = n/2$. If $m \geqslant n/2 + 1$ then this condition is not necessary, and the result still holds that the cop wins for all values of $n$. This answers the question of who wins for which $m$ on $K_n^{1/m}$ in almost all cases, except for a few small values. These are small enough to be checked by hand, we note that almost all of them obey the same relationship of the cop winning if $m \geqslant n/2$ and the robber winning if $m < n/2$, with the exceptions that the robber can also win in the following cases: $m=2, n=3$ or $4$; $m=3, n=6$; and $m=5, n=10$. 

In Section \ref{Girth6Section} we have shown that it is not necessarily true that subdividing a single edge of a locatable graph yields another locatable graph. It remains an open conjecture that it is however true that subdividing every edge of a locatable graph yields another locatable graph. Another natural question is whether for every graph $G$ there is some $m_G$ for which $G^{1/m}$ is locatable if and only if $m\geqslant m_G$. We have shown that this is the case for complete graphs and complete bipartite graphs by finding exact values of $m_G$ in those cases in Sections \ref{CompleteSection} and \ref{BipartiteSection} respectively. The question remains open in generality, although the authors believe it to be true.

Moreover, the authors note that the ideas developed in Section \ref{BipartiteSection} can also be used to prove similar results for complete $ r $-partite graphs. More precisely, if $G$ is such a graph with parts of sizes $ a_{1}, \ldots, a_{r} $, where $ a_{1} \leqslant a_{2} \leqslant \cdots \leqslant a_{r} $, then the lengths of subdivision required and sufficient to make $G^{1/m}$ locatable are each about $ \max \{ (a_{1} + \cdots + a_{r-1})/2 , a_{r-1} \} $. This generalises the results on complete bipartite graphs. In the case of balanced $r$-partite graphs this gives a threshold of $(n/2)(1-1/r) + O(1)$.

Finally, we note that for all graphs considered in this paper, subdividing the edges of an $ n $-vertex-graph about $ n/2 $ times is sufficient to make it locatable, and we conjecture that this is indeed the case for all finite graphs.

\begin{conjecture}
For all sufficiently large $n$, if $ G $ is a graph on $ n $ vertices then $ G^{1/m} $ is locatable for every $ m \geqslant n/2 $.
\end{conjecture}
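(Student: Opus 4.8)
The plan is to push the three-stage strategy of Theorem~\ref{CARlargemcopwin} through for an arbitrary base graph $G$ on $n$ vertices. In outline: Stage~1 forces the robber onto a known original vertex $v$; a sub-routine then either locates him or reduces to ``the robber is known to lie in a set of at most two original vertices''; and a Stage~3 argument (probe a vertex near one of the two, then track a single outgoing thread while watching for the robber doubling back) finishes. Since the cop does not control which original vertex the robber visits next, iterating the sub-routine needs a global accounting argument: one maintains a potential — for instance the number of outgoing threads at the robber's current original vertex that have not yet been excluded as his destination — and shows it strictly decreases at each visit to an original vertex, so the robber cannot postpone capture indefinitely.

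The first thing that needs rethinking is Stage~1 itself. For $K_n$ it works because, every original vertex being adjacent to every other, any probe of an original vertex detects the robber whenever he passes through \emph{any} original vertex; for general $G$ this fails, so one needs a modified opening — for example probing the original vertices in several passes, or interleaving probes of thread interiors — to force and confirm in bounded time that the robber has entered a known original vertex. I expect this to be a real but surmountable technicality.

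The core of the matter is the sub-routine. Once the robber leaves his known original vertex $v$ along some thread $v\cdots w$, the cop has about $m$ probes to determine $w$ (and must simultaneously detect a possible reversal). The key local fact is that a probe at a vertex $p$ separates the candidate destinations $w\in N_G(v)$ according to whether $d_{G^{1/m}}(p,w)\leqslant d_{G^{1/m}}(p,v)-m+O(1)$; adversarially, a single probe therefore only excludes the smaller of the two parts into which $p$ cuts $N_G(v)$. There are two regimes. If $\deg_G(v)\leqslant m+O(1)$, excluding one candidate per probe suffices — probe the threads leaving $v$, or probe original vertices, exactly as in the complete bipartite case — and $m\geqslant n/2$ leaves room to spare. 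If $\deg_G(v)>m+O(1)$, hence $\deg_G(v)>n/2$, one must exclude candidates much faster, using ``witnesses'': if two candidates $w_s,w_t$ are adjacent in $G$ the midpoint of the thread $w_s\cdots w_t$ excludes the pair at once (the $K_n$ trick), while if $w_s,w_t$ share a $G$-neighbour $u\notin N_G(v)\cup\{v\}$ then probing the original vertex $u$ cuts $N_G(v)$ into $N_G(u)\cap N_G(v)$ and its complement and excludes whichever part the robber avoids — potentially many candidates at a stroke.

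The main obstacle is to control this fast-exclusion regime uniformly over all $G$. A vertex of large degree can still have a structurally poor neighbourhood — a side of a balanced complete bipartite graph plus a few extra edges is the prototype — so one cannot always fall back on a clean ``two per probe'' bound; instead one has to prove a dichotomy to the effect that a vertex $v$ with $\deg_G(v)>n/2$ either admits witnesses that shrink the candidate set to $O(1)$ within $m$ probes, or else has the property that the robber's genuine reachable destinations from $v$ already number at most $m+O(1)$. Establishing such a dichotomy, marrying it with a peeling of low-degree vertices and with the Dirac-type density ($|N_G(x)\cap N_G(y)|\geqslant 2\delta-n$ for every pair) available once the minimum degree exceeds $n/2$, and then running the whole argument tightly enough that the threshold emerges as \emph{exactly} $n/2$ — recall that even the cases already settled carried side conditions such as $m\geqslant 7$ and small exceptional values of $n$ — is where the real work lies; the hypothesis ``for all sufficiently large $n$'' is presumably there precisely to swallow these boundary and lower-order effects.
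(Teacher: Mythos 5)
This statement is not proved in the paper at all: it is stated as an open conjecture, and your proposal does not close it either. What you have written is a plan whose decisive steps are explicitly left open, and those steps are exactly where the difficulty of the conjecture lives. Concretely: (a) your Stage~1 does not transfer from $K_n^{1/m}$, and not merely as a ``technicality''. In $K_n^{1/m}$ an answer of exactly $m$ to a probe of an original vertex certifies that the robber is at an original vertex, because all original vertices are pairwise at distance $m$; in a general $G^{1/m}$ the pairwise distances are varying multiples of $m$, many interior thread vertices realise the same distances as original vertices, and no single answer certifies ``he is at an original vertex'', so the opening needs a genuinely new idea rather than ``several passes''. (b) The heart of your argument is the claimed dichotomy for a vertex $v$ with $\deg_G(v)>n/2$: either witnesses (adjacent candidate pairs, or common neighbours outside $N_G(v)\cup\{v\}$) shrink the candidate set fast, or the reachable destinations number at most $m+O(1)$. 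You give no proof of this, and it is easy to write down neighbourhoods where both mechanisms fail simultaneously (a high-degree vertex whose neighbours are pairwise non-adjacent, have low degree and no common neighbours besides $v$); such configurations must then be handled by a different argument, which you do not supply. The Dirac-type bound $|N_G(x)\cap N_G(y)|\geqslant 2\delta-n$ only helps under a minimum-degree hypothesis that an arbitrary $G$ need not satisfy, and ``peeling low-degree vertices'' changes the graph and hence the game, with no argument that locatability transfers back (the paper itself shows such transfers are delicate: subdividing a single edge can destroy locatability).

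There is also a gap in your global accounting. In the proof of Theorem~\ref{CARlargemcopwin} the cop's progress (the set of eliminated original vertices) is monotone because completeness lets her police every excursion: the robber cannot re-enter an eliminated vertex without her detecting it. For general $G$ your proposed potential --- the number of unexcluded outgoing threads at the robber's current original vertex --- is attached to the current vertex and is not monotone across excursions: when the robber moves to a new original vertex the count resets, and he may later return unseen to vertices you ``excluded'' earlier by routes the cop was not monitoring. Without a globally monotone quantity, ``the robber cannot postpone capture indefinitely'' is an assertion, not a proof. In short, the proposal is a reasonable programme sketch, but both its local engine (the dichotomy) and its global engine (the potential argument) are missing, and supplying them is precisely the open content of the conjecture.
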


\section{Acknowledgements}
The first author acknowledges support from the European Union through funding under FP7--ICT--2011--8 project HIERATIC (316705), and is grateful to Douglas B. West for drawing his attention to this problem. The second author acnowledges support through funding from NSF grant DMS~1301614 and MULTIPLEX grant no. 317532, and is grateful to the organisers of the $8^\text{th}$ Graduate Student Combinatorics Conference at the University of Illinois at Urbana-Champaign for drawing his attention to the problem. The third author acknowledges support through funding from the European Union under grant EP/J500380/1 as well as from the Studienstiftung des Deutschen Volkes. The second and third authors would also like to thank Yuval Peres and the Theory Group at Microsoft Research Redmond for hosting them while some of this research was conducted.

\end{document}